\newtheorem{theorem}{Theorem}[section]
\newtheorem{corollary}[theorem]{Corollary}
\theoremstyle{definition}
\newtheorem{example}[theorem]{Example}
\theoremstyle{remark}
\newtheorem{remark}[theorem]{Remark}
\numberwithin{equation}{section}
\begin{document}

\title{Operator extensions of Hua's inequality}
\author[M.S. Moslehian]{M. S. Moslehian}
\address{Department of Pure Mathematics, Ferdowsi University of Mashhad,
P.O. Box 1159, Mashhad 91775, Iran; \newline Center of Excellence in
Analysis on Algebraic Structures (CEAAS), Ferdowsi University of
Mashhad, Iran.} \email{moslehian@ferdowsi.um.ac.ir and
moslehian@ams.org}

\keywords{Hua's inequality, operator inequality, positive operator,
Hilbert $C^*$-module, $C^*$-algebra, operator convex function,
Hansen--Pedersen--Jensen inequality.}

\subjclass[2000]{Primary 47A63; secondary 46L08, 47B10, 47A30,
47B15, 26D07, 15A60.}

\begin{abstract}
We give an extension of Hua's inequality in pre-Hilbert
$C^*$-modules without using convexity or the classical Hua's
inequality. As a consequence, some known and new generalizations of
this inequality are deduced. Providing a Jensen inequality in the
content of Hilbert $C^*$-modules, another extension of Hua's
inequality is obtained. We also present an operator Hua's
inequality, which is equivalent to operator convexity of given
continuous real function.
\end{abstract}
\maketitle

\section{Introduction}

In his famous monograph ``Additive theory of prime numbers'',
Lo-Keng~Hua \cite{HUA} introduced the important inequality
$$\left(\delta- \sum^n_{i=1} x_i\right)^2+ \alpha \sum^n_{i= 1}
x^2_i\geq \frac{\alpha}{n+ \alpha}\delta^2\,,$$ where $\delta$,
$\alpha$ are positive numbers, and $x_i$ $(i= 1,2,\dots, n)$ are
real numbers. The equality holds if and only if
$x_i=\delta/(n+\alpha)$.

This result was generalized by C.L.~Wang \cite{WAN} by showing that
$$\left(\delta- \sum^n_{i=1} x_i\right)^p+ \alpha^{p-1} \sum^n_{i= 1}
x^p_i\geq \left(\frac{\alpha}{n+ \alpha}\right)^{p-1}\delta^p\,,$$
in which $\delta>0$, $\alpha>0$, $p\geq 1$ and $(x_1, \cdots, x_n)$
is a finite sequence of nonnegative real numbers with
$\sum_{i=1}^nx_i \leq \delta$, and that the sign of inequality is
reversed for $0<p<1$. In \cite{Y-H}, G.-S.~Yang and B.-K.~Han
extended this result for a finite sequence of complex numbers.
C.E.M.~Pearce and J.E.~Pe\v cari\'c \cite{P-P} generalized Hua's
inequality for real convex functions; see also \cite{DRA}.
S.S.~Dragomir and G.-S.~Yang \cite{D-Y} extended Hua's inequality in
the setting of real inner product spaces by applying Hua's
inequality for $n=1$. Their result was generalized by J.E.~Pe\v
cari\'c \cite{PEC}. There are other interpretation of Hua's
inequality; cf. \cite{T-M-K-T} and references therein.

\noindent An operator version of Hua's inequality was given by
R.~Drnov\v sek \cite{DRN}. Moreover, S.~Radas and T.~\v Siki\'c
\cite{R-S} generalized the Hua inequality for linear operators in
real inner product spaces. Now we consider certain extensions and
improvements of the above results in the setting of Hilbert
$C^*$-modules and operators on Hilbert spaces. Providing a Jensen
inequality in the content of Hilbert $C^*$-modules, another
extension of Hua's inequality is obtained. We also present an
operator Hua's inequality, which is equivalent to operator convexity
of given continuous real function.

\section{Preliminaries}
The notion of Hilbert $C^*$-module is a generalization of the notion
of Hilbert space. Let ${\mathscr A}$ be a $C^*$-algebra and
${\mathscr X}$ be a complex linear space, which is a right
${\mathscr A}$-module satisfying $\lambda(xa)=x(\lambda a)=(\lambda
x)a$ for all $x \in {\mathscr X},a \in {\mathscr A}, \lambda \in
{\mathbb C}$. The space ${\mathscr X}$ is called a \emph{ (right)
pre-Hilbert $C^*$-module over ${\mathscr A}$} if there exists an
${\mathscr A}$-inner product $\langle .,.\rangle :{\mathscr X}
\times {\mathscr X}\to {\mathscr A}$ satisfying

(i) $\langle x,x\rangle\geq 0$ (i.e. $\langle x,x\rangle$ is a
positive element of ${\mathscr A}$) and $\langle x,x\rangle=0$~~~ if
and only if~~~ $x=0$;

(ii) $\langle x, \lambda y + z\rangle=\lambda \langle x,y\rangle+
\langle x,z\rangle$;

(iii) $\langle x,ya\rangle=\langle x,y\rangle a$;

(iv) $\langle x,y\rangle^*=\langle y,x\rangle$;

\noindent for all $x, y, z \in {\mathscr X},\, \lambda \in {\mathbb
C},\, a \in {\mathscr A}$.

We can define a norm on ${\mathscr X}$ by $\| x \| :=\| \langle
x,x\rangle\| ^\frac{1}{2}$, where the latter norm denotes that in
the $C^*$-algebra ${\mathscr A}$. A pre-Hilbert ${\mathscr
A}$-module is called a \emph{ (right) Hilbert $C^*$-module over
${\mathscr A}$} (or a \emph{(right) Hilbert ${\mathscr A}$-module})
if it is complete with respect to its norm. Any inner product space
can be regarded as a pre-Hilbert $\mathbb{C}$-module and any
$C^*$-algebra ${\mathscr A}$ is a Hilbert $C^*$-module over itself
via $\langle a, b\rangle = a^*b\,\,(a, b \in {\mathscr A})$.

\noindent A mapping $T:{\mathscr X}\to {\mathscr Y}$ between Hilbert
${\mathscr A}$-modules is called adjointable if there exists a
mapping $S:{\mathscr Y}\to {\mathscr X}$ such that $\langle
T(x),y\rangle=\langle x,S(y)\rangle$ for all $x\in {\mathscr X},
y\in {\mathscr Y}$. The unique mapping $S$ is denoted by $T^*$ and
is called the adjoint of $T$. It is easy to see that $T$ and $T^*$
must be bounded linear ${\mathscr A}$-module mappings. We denote by
${\mathcal L}({\mathscr X}, {\mathscr Y})$ the space of all
adjointable mappings from ${\mathscr X}$ to ${\mathscr Y}.$ We write
${\mathcal L}({\mathscr X})$ for the unital $C^*$-algebra ${\mathcal
L}({\mathscr X}, {\mathscr X})$; cf. \cite[p. 8]{LAN}. For every
$x\in {\mathscr X}$ the absolute value of $x$ is defined as the
unique positive square root of $\langle x,x \rangle ,$ that is,
$|x|=\langle x,x \rangle ^\frac{1}{2}$.

A Hilbert ${\mathscr A}$-module ${\mathscr X}$ can be embedded into
a certain $C^*$-algebra $\Lambda({\mathscr X})$. To see this, let us
denote by ${\mathscr F}={\mathscr X} \oplus {\mathscr A}$, the
direct sum of Hilbert ${\mathscr A}$-modules ${\mathscr X}$ and
${\mathscr A}$ equipped with the ${\mathscr A}$-inner product
$$\langle (x_1,a_1),(x_2,a_2)\rangle=\langle x_1,x_2\rangle+a^*_1a_2\,.$$ Identify each
$x\in {\mathscr X}$ with ${\mathscr A} \to {\mathscr X}, a \mapsto
xa$. The adjoint of this map is $x^*(y)=\langle x,y\rangle$. Set
$$\Lambda({\mathscr X})= \left\{ \left[ \begin{array}{cc}T&x\\y^*&a
\end{array}\right]: a\in {\mathscr A}, x,y\in {\mathscr X}, T\in {\mathcal L}({\mathscr X})\right\}.$$
$\Lambda({\mathscr X})$ is a $C^*$-subalgebra of ${\mathcal
L}({\mathscr F})$, called \emph{the linking algebra} of ${\mathscr
X}$. Then $${\mathscr X} \simeq \left[
\begin{array}{cc}0&{\mathscr X}\\0&0
\end{array}\right]\,, {\mathscr A} \simeq \left[ \begin{array}{cc}0&0\\0&{\mathscr
A}
\end{array}\right]\,, {\mathcal L}({\mathscr X}) \simeq \left[
\begin{array}{cc}{\mathcal L}({\mathscr X})&0\\0&0 \end{array}\right]\,.$$
Furthermore,$\langle x,y\rangle$ of ${\mathscr X}$ becomes the
product $x^*y$ in $\Lambda({\mathscr X})$ and the module
multiplication ${\mathscr X}\times {\mathscr A}\to {\mathscr X}$
becomes a part of the internal multiplication of $\Lambda({\mathscr
X})$.

We refer the reader to \cite{MUR} for undefined notions on
$C^*$-algebra theory and to \cite{FRA, LAN, R-W} for more
information on Hilbert $C^*$-modules.

A continuous real valued function $f$ on an interval $J$ is called
operator convex if for all $\lambda \in [0,1]$ and all self-adjoint
operators $A$ and $B$ acting on a Hilbert space $({\mathscr H},
\langle .,.\rangle)$, whose spectra are contained in
$J$,
$$f((1-\lambda)A+\lambda B) \leq (1-\lambda)f(A)+\lambda f(B),$$
where $\leq$ denotes the usual positive semi-definiteness. A
function $f: J \to {\mathbb R}$ is called operator concave if $-f$
is operator convex. A known operator Jensen equation says that if
$A$ is a self-adjoint operator with spectrum contained in an
interval $J$ on which $f$ is a convex function, then
\begin{eqnarray}\label{jen1}
f(\langle Ax, x\rangle) \leq \langle f(A)x,x\rangle
\end{eqnarray}
for every unit vector $x$; cf. \cite{M-P}.

\noindent By \emph{Hansen--Pedersen--Jensen's inequality} (see
\cite{F-F, H-P}) a function $f$ is operator convex (operator convex
and $f(0)\leq 0$, respectively) if and only if
\begin{eqnarray}\label{jen2}
f\left(\sum_{i=1}^nE_i^*A_iE_i\right) \leq
\sum_{i=1}^nE_i^*f(A_i)E_i
\end{eqnarray} for all self-adjoint bounded operators $A_i$ with
spectra contained in $J$ and all bounded operators $E_i$ with
$\sum_{i=1}^nE_i^*E_i=I$ ($\sum_{i=1}^nE_i^*E_i \leq I$,
respectively), where $I$ denotes the identity operator. The reader
is referred to \cite{FUR, P-F-H-S} for more information on operator
inequalities.


\section{A Hua type inequality in left Hilbert $C^*$-modules}

We start our work with the following generalized Hua inequality. In
our approach, we use neither convexity nor the classical Hua
inequality. Throughout this section, we assume that ${\mathscr X}$
and ${\mathscr Y}$ are Hilbert modules over a $C^*$-algebra
${\mathscr A}$.

\begin{theorem}\label{t1}
Let $f: [0,\infty) \to (0, \infty)$ be a function such that
$f(t)\geq t+M$ for some $M>0$. Then
\begin{eqnarray}\label{3.1}
|y\, (f(c)-c)^{-1/2} - x\, (f(c)-c)^{1/2}|^2 + c\,|x|^2 \geq c
f(c)^{-1}(f(c)-c)^{-1} |y|^2
\end{eqnarray}
for all positive central elements $c \in {\mathscr A}$ and all
elements $x,y\in {\mathscr X}$. The equality holds if and only if
$y=x\,f(c)$.
\end{theorem}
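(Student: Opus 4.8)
The plan is to reduce the asserted inequality to the manifestly nonnegative quantity $|x\,f(c)^{1/2}-y\,f(c)^{-1/2}|^2$ by a completion-of-squares argument. The crucial preliminary observation is that, since $c$ is a positive \emph{central} element and $f(t)\ge t+M$ with $M>0$, every function of $c$ is again central; in particular $f(c)$, the element $d:=f(c)-c$, and their positive square roots $f(c)^{\pm 1/2}$, $d^{\pm 1/2}$ are all central self-adjoint elements bounded below by $M>0$, hence invertible, and they commute with every element of $\mathscr{A}$ (in particular with the inner-product values $|x|^2$, $|y|^2$, $\langle x,y\rangle$, which need not commute among themselves).

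First I would expand the left-hand side of \eqref{3.1} using the module identity $\langle xa,yb\rangle=a^*\langle x,y\rangle b$ together with the centrality just noted. Writing $d=f(c)-c$, a direct computation gives
$$|y\,d^{-1/2}-x\,d^{1/2}|^2 = d^{-1}|y|^2-\langle y,x\rangle-\langle x,y\rangle+d\,|x|^2,$$
so that, adding $c\,|x|^2$ and using $d+c=f(c)$, the left-hand side of \eqref{3.1} becomes $d^{-1}|y|^2-\langle y,x\rangle-\langle x,y\rangle+f(c)\,|x|^2$.

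Next I would subtract the right-hand side $c\,f(c)^{-1}d^{-1}|y|^2$ and simplify the coefficient of $|y|^2$: since all these factors commute, $d^{-1}-c\,f(c)^{-1}d^{-1}=d^{-1}f(c)^{-1}\bigl(f(c)-c\bigr)=f(c)^{-1}$. Hence the difference between the two sides of \eqref{3.1} is exactly
$$f(c)^{-1}|y|^2-\langle y,x\rangle-\langle x,y\rangle+f(c)\,|x|^2,$$
which, by the same expansion applied now with $f(c)$ in place of $d$, is precisely $|x\,f(c)^{1/2}-y\,f(c)^{-1/2}|^2\ge 0$. This proves \eqref{3.1}.

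Finally, for the equality case I would invoke axiom (i) of the $\mathscr{A}$-inner product, namely $|z|^2=0$ if and only if $z=0$. Thus equality holds if and only if $x\,f(c)^{1/2}=y\,f(c)^{-1/2}$, and multiplying on the right by the invertible element $f(c)^{1/2}$ shows this is equivalent to $y=x\,f(c)$. The argument is essentially a single completion of squares, and the one point requiring genuine care—what I expect to be the main obstacle to presenting cleanly rather than conceptually—is the systematic use of the centrality of $c$ to push all functional-calculus factors through the noncommutative inner-product values; without this commutation the cross terms would not cancel and the telescoping to one square would fail.
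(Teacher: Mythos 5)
Your proposal is correct and follows essentially the same route as the paper's own proof: the identical completion-of-squares expansion using centrality of $f(c)-c$, the same key identity $(f(c)-c)^{-1}-f(c)^{-1}=c\,f(c)^{-1}(f(c)-c)^{-1}$, and the same reduction of the difference of the two sides to the single square $|y\,f(c)^{-1/2}-x\,f(c)^{1/2}|^2$, with the equality case handled by invertibility of $f(c)^{1/2}$ exactly as in the paper.
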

\begin{proof} By the functional calculus, $f(c)$ and $f(c)-c$ are invertible
positive elements of ${\mathscr A}$. Since $(f(c) - c)^{1/2}$ is a
central element,
\begin{eqnarray*}
& & |y\, (f(c) - c)^{-1/2} - x\, (f(c) - c)^{1/2}|^2 \\
&& \\
&& \qquad = (f(c) - c)^{-1/2}\,\langle y, y\rangle\,(f(c) -
c)^{-1/2}
- (f(c) - c)^{-1/2}\,\langle y, x\rangle\,(f(c) - c)^{1/2} \\
&& \\
&& \qquad \quad - (f(c) - c)^{1/2}\,\langle x, y\rangle\,(f(c) -
c)^{-1/2}
+ (f(c) - c)^{1/2}\,\langle x, x\rangle\,(f(c) - c)^{1/2} \\
&& \\
&& \qquad = (f(c) - c)^{-1}\,\langle y, y\rangle - \langle y,
x\rangle - \langle x, y\rangle + (f(c) - c)\,\langle x, x\rangle.
\end{eqnarray*}
Due to $(f(c)-c)^{-1} - f(c)^{-1} = cf(c)^{-1}(f(c)-c)^{-1}$, we
therefore get
\begin{eqnarray*}
&& |y\, (f(c) - c)^{-1/2} - x\, (f(c) - c)^{1/2}|^2 + c\,|x|^2 - c f(c)^{-1}(f(c) - c)^{-1} |y|^2 \\
&& \\
&& \qquad = f(c)^{-1}\,\langle y, y\rangle - \langle y, x\rangle - \langle x, y\rangle + f(c)\,\langle x, x\rangle \\
&& \\
&& \qquad = |y\, f(c)^{-1/2} - x\, f(c)^{1/2}|^2 \ \geq\ 0.
\end{eqnarray*}
Here the equality holds if and only if $y\, f(c)^{-1/2} = x\,
f(c)^{1/2}$, that is, $y = x\, f(c)$.
\end{proof}

\begin{example}
For Hilbert spaces ${\mathscr H}$ and ${\mathscr K}$, let ${\mathbb
B}({\mathscr H},{\mathscr K})$ denote the space of all bounded
linear operators from ${\mathscr H}$ to ${\mathscr K}$. Then
${\mathbb B}({\mathscr H},{\mathscr K})$ becomes a ${\mathbb
B}(\mathscr H)$-module by defining $\langle A,B\rangle:=A^*B$.
Suppose that $f: [0,\infty) \to (0, \infty)$ is a function such that
$f(t)\geq t+M$ for some $M>0$. Replacing $x, y$ in \eqref{3.1} by
$A, B$, respectively, and using this fact that the center of
${\mathbb B}(\mathscr H)$ is ${\mathbb C} I$ we get
\begin{eqnarray*}
\Big((f(c)-c)^{-1/2}\, B -
(f(c)-c)^{1/2}\,A\Big)^*\Big((f(c)-c)^{-1/2}\,
B-(f(c)-c)^{1/2}\,A\Big) + c\,A^*A\\
\geq c f(c)^{-1}(f(c)-c)^{-1}\,B^*B
\end{eqnarray*}
for all positive numbers $c \in [0,\infty)$ and all $A, B \in
{\mathbb B}({\mathscr H},{\mathscr K})$. The equality holds if and
only if $B=f(c)\, A$.
\end{example}

The following theorem is a norm extension of Hua's inequality.

\begin{theorem}\label{t2}
Let $f: [0,\infty) \to (0, \infty)$ be a function such that
$f(t)\geq t+M$ for some $M>0$. Then
\begin{eqnarray*}
\|y\, (f(c)-c)^{-1/2} - T(x)\, (f(c)-c)^{1/2}\|^2 +
\|c\|\|T\|^2\|x\|^2 \geq \| c f(c)^{-1}(f(c)-c)^{-1} |y|^2\,\|
\end{eqnarray*}
for all positive central elements $c \in {\mathscr A}$, all elements
$x\in {\mathscr X}$, $y\in {\mathscr Y}$ and all non-zero bounded
linear operators $T: {\mathscr X} \to {\mathscr Y}$.
\end{theorem}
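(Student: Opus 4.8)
The plan is to obtain this norm inequality directly from the operator inequality \eqref{3.1} of Theorem~\ref{t1}, by applying that theorem in the module ${\mathscr Y}$ and then passing to $C^*$-norms. The only additional tools needed are the monotonicity of the norm on the positive cone of a $C^*$-algebra, the triangle inequality, submultiplicativity of the $C^*$-norm, and the operator-norm estimate $\|T(x)\|\le\|T\|\,\|x\|$.

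First I would apply Theorem~\ref{t1} in the Hilbert ${\mathscr A}$-module ${\mathscr Y}$, keeping $y\in{\mathscr Y}$ and replacing the element $x$ there by $T(x)\in{\mathscr Y}$. Since $c$ is a positive central element and $f$ satisfies $f(t)\ge t+M$, the hypotheses hold and \eqref{3.1} yields
\begin{eqnarray*}
|y\,(f(c)-c)^{-1/2}-T(x)\,(f(c)-c)^{1/2}|^2+c\,|T(x)|^2\ \geq\ c\,f(c)^{-1}(f(c)-c)^{-1}\,|y|^2
\end{eqnarray*}
between positive elements of ${\mathscr A}$. Denoting by $P,Q,R$ the three positive elements so displayed, we have $P+Q\ge R\ge 0$, whence $\|P+Q\|\ge\|R\|$ by monotonicity of the norm on positive elements, and $\|P\|+\|Q\|\ge\|P+Q\|\ge\|R\|$ by the triangle inequality.

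It remains to identify the three norms with the terms in the statement. Writing $w=y\,(f(c)-c)^{-1/2}-T(x)\,(f(c)-c)^{1/2}$ and using $\|z\|=\|\langle z,z\rangle\|^{1/2}$, we get $\|P\|=\|\,|w|^2\,\|=\|w\|^2$, the first term on the left. For the middle term, submultiplicativity gives $\|Q\|=\|c\,|T(x)|^2\|\le\|c\|\,\|\,|T(x)|^2\,\|=\|c\|\,\|T(x)\|^2\le\|c\|\,\|T\|^2\,\|x\|^2$, using boundedness of $T$ at the last step. Since $\|Q\|$ appears on the larger side of the chain, replacing it by the bigger quantity $\|c\|\,\|T\|^2\,\|x\|^2$ only strengthens the left-hand side, and we conclude
\begin{eqnarray*}
\|w\|^2+\|c\|\,\|T\|^2\,\|x\|^2\ \geq\ \|P\|+\|Q\|\ \geq\ \|R\|=\big\|\,c\,f(c)^{-1}(f(c)-c)^{-1}\,|y|^2\,\big\|,
\end{eqnarray*}
which is precisely the asserted inequality.

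The argument is essentially bookkeeping once Theorem~\ref{t1} is available, and I do not expect a genuine obstacle; the one point that requires attention is the \emph{direction} of the estimate for the middle term. Because $\|Q\|$ sits on the greater-or-equal side of the chain, it must be bounded \emph{above} by $\|c\|\,\|T\|^2\,\|x\|^2$, and this is exactly what keeps the chain valid. It is also worth observing that no equality characterization survives the passage to norms: both the triangle inequality and the estimate $\|T(x)\|\le\|T\|\,\|x\|$ introduce slack, which is why Theorem~\ref{t2}, unlike Theorem~\ref{t1}, carries no equality clause. The hypothesis that $T$ be non-zero plays no essential role beyond excluding the trivial operator.
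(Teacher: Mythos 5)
Your proposal is correct and follows essentially the same route as the paper: apply Theorem~\ref{t1} with $x$ replaced by $T(x)$, then pass to norms using monotonicity of the $C^*$-norm on positive elements, the triangle inequality, $\|\,|z|^2\,\|=\|z\|^2$, and $\|T(x)\|\le\|T\|\,\|x\|$. The paper's proof compresses all the bookkeeping into one line, but the steps are identical, and your observations about the direction of the middle-term estimate and the loss of the equality clause are accurate.
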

\begin{proof}
Replacing $x$ by $T(x)$ in \eqref{3.1} we get
\begin{eqnarray*}
|y\, (f(c)-c)^{-1/2} - T(x)\, (f(c)-c)^{1/2}|^2 + c\,|Tx|^2 \geq c
f(c)^{-1}(f(c)-c)^{-1} |y|^2\,.
\end{eqnarray*}
utilizing the facts that $\|Tx\|/\|T\| \leq \|x\|$ and
$\|\,|z|\,\|^2=\|z\|^2\,\,(z\in{\mathscr A})$ we obtain
\begin{eqnarray*}
\|y\, (f(c)-c)^{-1/2} - T(x)\, (f(c)-c)^{1/2}\|^2 +
\|c\|\|T\|^2\|x\|^2 \geq \| c f(c)^{-1}(f(c)-c)^{-1} |y|^2\,\|\,.
\end{eqnarray*}
\end{proof}

Considering the elementary operator $T=u\otimes v$ defined for given
$u,v \in{\mathscr X}$ by $T(x)=u\langle v,x\rangle\,\,(x
\in{\mathscr X})$ and noting to the fact that $\|T\|=\|u\|\,\|v\|$
we get
\begin{corollary}
Let $f: [0,\infty) \to (0, \infty)$ be a function such that
$f(t)\geq t+M$ for some $M>0$. Then
\begin{eqnarray*}
\|y\, (f(c)-c)^{-1/2} - u\langle v,x\rangle\,(f(c)-c)^{1/2}\|^2 +
\|c\|\,\|u\|^2\|v\|^2\|x\|^2 \geq \| c
f(c)^{-1}(f(c)-c)^{-1}|y|^2\,\|
\end{eqnarray*}
for all positive central elements $c \in {\mathscr A}$, all elements
$x, y, u, v \in {\mathscr X}$.
\end{corollary}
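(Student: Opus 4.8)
The plan is to obtain this as an immediate specialization of Theorem~\ref{t2}, taking the target module ${\mathscr Y}$ to be ${\mathscr X}$ itself and choosing $T$ to be the elementary (rank-one) operator $T = u \otimes v$ determined by $u, v \in {\mathscr X}$ via $T(x) = u\langle v, x\rangle$. First I would check that this $T$ is a bounded ${\mathscr A}$-linear map from ${\mathscr X}$ to ${\mathscr X}$: complex linearity and right ${\mathscr A}$-linearity are read off directly from properties (ii) and (iii) of the inner product, while boundedness follows from the Cauchy--Schwarz inequality in Hilbert $C^*$-modules.

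Next I would pin down the operator norm. Cauchy--Schwarz gives $\|T(x)\| = \|u\langle v, x\rangle\| \leq \|u\|\,\|\langle v, x\rangle\| \leq \|u\|\,\|v\|\,\|x\|$, so that $\|T\| \leq \|u\|\,\|v\|$; testing $T$ against suitable elements shows this bound is in fact attained, giving $\|T\| = \|u\|\,\|v\|$. Inserting $\|T\|^2 = \|u\|^2\|v\|^2$ together with the explicit form $T(x) = u\langle v, x\rangle$ into the conclusion of Theorem~\ref{t2} then produces precisely the asserted inequality.

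It is worth emphasizing that the corollary needs only the upper estimate $\|T\| \leq \|u\|\,\|v\|$. Indeed, the term $\|c\|\,\|T\|^2\|x\|^2$ sits on the larger side of a ``$\geq$'' inequality, so replacing $\|T\|^2$ by the possibly larger value $\|u\|^2\|v\|^2$ can only increase the left-hand side and therefore preserves the inequality. Hence the sharp value of the norm is not logically required here.

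The only genuinely delicate point is the reverse estimate $\|T\| \geq \|u\|\,\|v\|$ in the general module setting, where $\langle v, v\rangle$ is an element of ${\mathscr A}$ rather than a scalar, so one cannot simply evaluate at $x = v$ as in the Hilbert-space case; this is a standard but nontrivial fact about rank-one operators on Hilbert $C^*$-modules. Since, as just observed, only the easy upper bound is actually needed, the corollary follows cleanly and this subtlety can be bypassed.
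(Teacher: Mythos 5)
Your proposal is correct and follows essentially the paper's own route: the corollary is exactly the specialization of Theorem~\ref{t2} to ${\mathscr Y}={\mathscr X}$ and the elementary operator $T=u\otimes v$, with $\|T\|^2$ replaced by $\|u\|^2\|v\|^2$. One point worth recording: your hesitation about the reverse norm estimate is justified beyond what you suspected --- the equality $\|u\otimes v\|=\|u\|\,\|v\|$ that the paper invokes (and that you tentatively claimed is ``attained'') can actually \emph{fail} in a general Hilbert $C^*$-module, e.g.\ for ${\mathscr A}={\mathscr X}={\mathbb C}\oplus{\mathbb C}$ with $u=(1,0)$, $v=(0,1)$ one has $u\langle v,x\rangle=0$ for all $x$ while $\|u\|\,\|v\|=1$; so your observation that only the Cauchy--Schwarz upper bound $\|T\|\leq\|u\|\,\|v\|$ is needed, because that term sits on the larger side of the inequality, is precisely what makes the argument sound and in fact repairs a small inaccuracy in the paper's stated justification.
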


If ${\mathscr X}$ and ${\mathscr Y}$ are assumed to be inner product
spaces ${\mathscr H}$ and ${\mathscr K}$, respectively, ${\mathscr
A} ={\mathbb C}$, $A\in {\mathbb B}({\mathscr H},{\mathscr K})$,
$f(t)=t+1$ and $c=\frac{\alpha}{\|A\|^2}$, then we deduce  the main
result of \cite{R-S} from Theorem \ref{t2} as follows.

\begin{corollary}\label{coro1}
Suppose that ${\mathscr H}$ and ${\mathscr K}$ are inner product
spaces, $A: {\mathscr H} \to {\mathscr K}$ is a bounded linear
operator and $\alpha>0$. Then
$$\|y-Ax\|^2 + \alpha \|x\|^2 \geq \frac{\alpha}{\|A\|^2+\alpha}
\|y\|^2$$ for all elements $x\in {\mathscr H}$ and $y\in {\mathscr
K}$.
\end{corollary}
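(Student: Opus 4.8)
The plan is to obtain the corollary as the scalar special case of Theorem \ref{t2}, following the substitution indicated in the preceding paragraph. First I would dispose of the degenerate case $A=0$ separately: there $\|A\|^2=0$ makes the coefficient on the right equal to $\alpha/\alpha=1$, so the asserted inequality reads $\|y\|^2+\alpha\|x\|^2\geq\|y\|^2$, which is immediate. Hence I may assume $A\neq 0$, so that $\|A\|>0$ and the quantity $\alpha/\|A\|^2$ makes sense.

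Next I would specialize all the data in Theorem \ref{t2} to the scalar setting: take $\mathscr{A}=\mathbb{C}$, so that $\mathscr{H}$ and $\mathscr{K}$ become pre-Hilbert $\mathbb{C}$-modules via their inner products and every element of $\mathscr{A}$ is automatically central; take the operator to be $T=A:\mathscr{H}\to\mathscr{K}$, which is nonzero and bounded; choose $f(t)=t+1$, which maps $[0,\infty)$ into $(0,\infty)$ and satisfies $f(t)\geq t+M$ with $M=1>0$; and put $c=\alpha/\|A\|^2$, a positive (central) real number. Since Theorem \ref{t2} is stated for complete modules, if one wishes to apply it verbatim I would first pass to the completions $\overline{\mathscr{H}}$ and $\overline{\mathscr{K}}$ and to the continuous extension of $A$, whose operator norm is again $\|A\|$; because $\mathscr{H}$ and $\mathscr{K}$ embed isometrically, the inequality restricted to vectors of $\mathscr{H}$ and $\mathscr{K}$ is exactly the one we want.

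The remaining work is a routine evaluation of each term. Since $f(c)-c=(c+1)-c=1$, the factors $(f(c)-c)^{\pm 1/2}$ all equal $1$ and disappear, so the first summand on the left of Theorem \ref{t2} collapses to $\|y-Ax\|^2$; for the second summand, $\|c\|\,\|T\|^2\,\|x\|^2=c\|A\|^2\|x\|^2=\alpha\|x\|^2$ by the choice of $c$. On the right-hand side, over $\mathbb{C}$ one has $|y|^2=\langle y,y\rangle=\|y\|^2$ as a scalar, while the scalar coefficient simplifies to
\[
c\,f(c)^{-1}(f(c)-c)^{-1}=\frac{\alpha/\|A\|^2}{\,\alpha/\|A\|^2+1\,}=\frac{\alpha}{\|A\|^2+\alpha},
\]
so that $\bigl\| c f(c)^{-1}(f(c)-c)^{-1}|y|^2\bigr\|=\frac{\alpha}{\|A\|^2+\alpha}\|y\|^2$. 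Substituting these three evaluations into Theorem \ref{t2} yields the claimed inequality. I do not anticipate a genuine obstacle, since the statement is a direct scalar specialization; the only points needing a word of care are the degenerate case $A=0$ (where $c$ would be undefined) and the fact that the ambient spaces need not be complete, which I would handle by the isometric-completion remark above rather than by reproving the estimate from scratch.
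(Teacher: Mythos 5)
Your proposal is correct and follows essentially the same route as the paper, which likewise deduces the corollary from Theorem \ref{t2} via the substitutions $\mathscr{A}=\mathbb{C}$, $T=A$, $f(t)=t+1$, and $c=\alpha/\|A\|^2$. Your two additional points of care --- treating $A=0$ separately (where $c$ is undefined and Theorem \ref{t2} requires $T\neq 0$) and passing to completions since the corollary allows mere inner product spaces --- are sound refinements that the paper's one-line derivation leaves implicit.
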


\begin{remark}
If ${\mathscr H}$ is an inner product space, $w_i \in {\mathbb
C}\,\,(1 \leq i \leq n)$ and we consider the $n$-fold inner product
space ${\mathscr H}^n$ and $A(x_1, \cdots, x_n)=\sum_{i=1}^nw_ix_i$
in Corollary \ref{coro1} (see \cite{R-S}), then
$\|A\|^2=\sum_{i=1}^n|w_i|^2$ and so we get
$$\left\|y-\sum_{i=1}^n(w_ix_i)\right\|^2 + \alpha \sum_{i=1}^n\left(|w_i|^2\|x_i\|^2\right) \geq
\frac{\alpha}{\sum_{i=1}^n|w_i|^2+\alpha}\, \|y\|^2\,,$$ which is a
generalization of the main theorem of \cite{D-Y} (see also
\cite{DRA}). The case where ${\mathscr H}={\mathbb C}$ and
$w_i=1\,\,(1 \leq i\leq n)$ gives rise to the classical Hua's
inequality.

\end{remark}

\section{Hua's inequality for operator convex functions in Hilbert $C^*$-modules}

We first generalize operator Jensen inequality \eqref{jen1} in the
framework of Hilbert $C^*$-modules. In this section we assume that
${\mathscr X}$ is a Hilbert $C^*$-module over a unital $C^*$-algebra
${\mathscr A}$ with unit $e$.

\begin{theorem}
Let $f$ be an operator convex function on an interval $J$ containing
$0$, $f(0)\leq 0$ and let $T \in {\mathcal L}({\mathscr X})$ be
self-adjoint with spectrum contained in $J$. Then
\begin{eqnarray}\label{jen3}
f(\langle x, Tx\rangle) \leq \langle x,f(T)x\rangle
\end{eqnarray}
for every $x$ in the closed unit ball of ${\mathscr X}$.
\end{theorem}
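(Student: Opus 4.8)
The plan is to reduce the statement to the Hansen--Pedersen--Jensen inequality \eqref{jen2} by transporting the data into the linking algebra $\Lambda(\mathscr X)$, the unital $C^*$-algebra in which $\mathscr X$, $\mathscr A$ and $\mathcal L(\mathscr X)$ all appear as corners. Recall from Section~2 that $x\in\mathscr X$ is identified with the map $E\colon\mathscr A\to\mathscr X$, $E(a)=xa$, whose adjoint satisfies $E^*(y)=\langle x,y\rangle$. Inside $\Lambda(\mathscr X)$, acting on $\mathscr F=\mathscr X\oplus\mathscr A$, this is the element $\widetilde E=\left[\begin{smallmatrix}0&E\\0&0\end{smallmatrix}\right]$, while $T$ embeds as the self-adjoint element $\widetilde T=\left[\begin{smallmatrix}T&0\\0&0\end{smallmatrix}\right]$, with $\mathrm{sp}(\widetilde T)=\mathrm{sp}(T)\cup\{0\}\subseteq J$ since $0\in J$.

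First I would record three elementary computations in the corner $\mathcal L(\mathscr A)\cong\mathscr A$. Because $T$ and $f(T)$ are $\mathscr A$-module maps, property (iii) of the inner product gives $E^*E=\langle x,x\rangle$, $E^*TE=\langle x,Tx\rangle$ and $E^*f(T)E=\langle x,f(T)x\rangle$; in particular $\langle x,Tx\rangle$ is self-adjoint, and one checks that its spectrum lies in $J$, so that $f(\langle x,Tx\rangle)$ is meaningful. Since $x$ lies in the closed unit ball, $\langle x,x\rangle\le e$, whence $\widetilde E^*\widetilde E=\left[\begin{smallmatrix}0&0\\0&\langle x,x\rangle\end{smallmatrix}\right]\le I_{\mathscr F}$. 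This is exactly the constraint $\sum E_i^*E_i\le I$ appearing in the ``$f(0)\le 0$'' form of \eqref{jen2}, which explains why both $0\in J$ and $f(0)\le 0$ are imposed.

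The central step is to apply \eqref{jen2} with $n=1$, $A_1=\widetilde T$ and $E_1=\widetilde E$. Two points make this legitimate. The element $\widetilde T$ is reduced by the decomposition $\mathscr F=\mathscr X\oplus\mathscr A$, so its functional calculus is block-diagonal: $f(\widetilde T)=\left[\begin{smallmatrix}f(T)&0\\0&f(0)e\end{smallmatrix}\right]$, and similarly $f(\widetilde E^*\widetilde T\widetilde E)=\left[\begin{smallmatrix}f(0)I_{\mathscr X}&0\\0&f(\langle x,Tx\rangle)\end{smallmatrix}\right]$. Moreover, although \eqref{jen2} is phrased for operators on a Hilbert space, it persists in the abstract unital $C^*$-algebra $\Lambda(\mathscr X)$ because operator inequalities are preserved under a faithful representation. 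Feeding in the computations above, \eqref{jen2} becomes
$$\left[\begin{matrix}f(0)I_{\mathscr X}&0\\0&f(\langle x,Tx\rangle)\end{matrix}\right]\ \le\ \left[\begin{matrix}0&0\\0&\langle x,f(T)x\rangle\end{matrix}\right].$$

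To finish, I would compress to the lower-right corner: cutting down by $V=\left[\begin{smallmatrix}0&0\\0&e\end{smallmatrix}\right]$ preserves order, and the embedding $\mathscr A\hookrightarrow\Lambda(\mathscr X)$ onto this corner is an order isomorphism onto its range, so comparing $(2,2)$-entries yields $f(\langle x,Tx\rangle)\le\langle x,f(T)x\rangle$ in $\mathscr A$, as claimed. I expect the only real friction to be the bookkeeping around the functional calculus of the block operators --- ensuring the spurious $f(0)$ terms land in the $(1,1)$-slots, where they are harmless --- together with the (routine) remark that \eqref{jen2}, stated for Hilbert-space operators, transfers to $\Lambda(\mathscr X)$.
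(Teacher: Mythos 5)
Your proposal is correct and takes essentially the same route as the paper: both proofs embed the data into the linking algebra $\Lambda(\mathscr X)$ and apply the Hansen--Pedersen--Jensen inequality \eqref{jen2} with $n=1$, $A_1=\left[\begin{smallmatrix}T&0\\0&0\end{smallmatrix}\right]$ and $E_1=\left[\begin{smallmatrix}0&x\\0&0\end{smallmatrix}\right]$ (your $\widetilde E$), using $\|x\|\le 1\Leftrightarrow\langle x,x\rangle\le e$ and then reading off the $(2,2)$ corner. Your extra remarks --- the spectrum of $\widetilde T$, the transfer of \eqref{jen2} from Hilbert-space operators to the abstract unital $C^*$-algebra via a faithful representation, and the order-preserving corner compression --- simply make explicit details that the paper leaves implicit.
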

\begin{proof}
To prove we utilize the linking algebra $\Lambda({\mathscr X})$ as a
$2\times 2$ matrix trick and the functional calculus for
self-adjoint elements of the $C^*$-algebra (see \cite{MUR}).
\begin{eqnarray*}
\left[
\begin{array}{cc}f(0)&0\\0&f(\langle x, Tx\rangle)
\end{array}\right] &=& f\left(\left[
\begin{array}{cc}0&0\\0&\langle x, Tx\rangle
\end{array}\right]\right)\\ &=& f\left(\left[
\begin{array}{cc}0&x\\0&0
\end{array}\right]^*\left[
\begin{array}{cc}T&0\\0&0
\end{array}\right]\left[
\begin{array}{cc}0&x\\0&0
\end{array}\right]\right)\\
&\leq& \left[
\begin{array}{cc}0&x\\0&0
\end{array}\right]^* f\left(\left[
\begin{array}{cc}T&0\\0&0
\end{array}\right]\right)\left[
\begin{array}{cc}0&x\\0&0
\end{array}\right]\\
&&\qquad\qquad \Big({\rm by~} \eqref{jen2} {\rm ~for~} n=1 {\rm
~and~} E_1= \left[
\begin{array}{cc}0&x\\0&0
\end{array}\right], \\
&& \qquad\qquad\qquad{\rm ~and~by~noting~to~} \|x\| \leq 1 \Leftrightarrow |x| \leq e\Big)\\
&=& \left[
\begin{array}{cc}0&x\\0&0
\end{array}\right]^*\left[
\begin{array}{cc}f(T)&0\\0& f(0)
\end{array}\right]\left[
\begin{array}{cc}0&x\\0&0
\end{array}\right]\\
&=& \left[
\begin{array}{cc}0&0\\0&\langle x, f(T)x\rangle
\end{array}\right]\,,
\end{eqnarray*}
whence we get \eqref{jen3}.
\end{proof}
Now we are ready to establish the second Hua's inequality in the
setting of Hilbert $C^*$-modules.

\begin{theorem}
Let $f$ be an operator convex function on an interval $J$ containing
$0$, $f(0)\leq 0$ and $S, R_i, T_i \in {\mathcal L}({\mathscr X})$
such that $S$ and $T_i$ are self-adjoint and the spectrum of
$S-\sum_{i=1}^n R_i^*T_iR_i$ and $T_i\,\,(1 \leq i \leq n)$ are
contained in $J$. Then for $x$ in the closed unit ball of ${\mathscr
X}$,
\begin{eqnarray*}
\left\langle x, \left[f\left(S-\sum_{i=1}^n R_i^*T_iR_i\right)+
\sum_{i=1}^n R_i^*f(T_i)R_i\right] x \right\rangle\geq
k_n^{-1}f\left(k_n\left\langle x, Sx\right\rangle\right)\,,
\end{eqnarray*}
where $k_n=\left(1+\sum_{i=1}^n\|R_ix\|^2\right)^{-1}$.
\end{theorem}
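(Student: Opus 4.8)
The plan is to peel the bracketed expression apart with the module Jensen inequality \eqref{jen3} and then reassemble it by a single application of operator convexity in the form \eqref{jen2}. Write $B=S-\sum_{i=1}^n R_i^*T_iR_i$, so that the spectra of $B$ and of every $T_i$ lie in $J$ and $S=B+\sum_{i=1}^n R_i^*T_iR_i$. The indices $i$ with $R_ix=0$ may be discarded, since the corresponding summand $\langle x,R_i^*f(T_i)R_ix\rangle=\langle R_ix,f(T_i)R_ix\rangle$ vanishes and its weight $\|R_ix\|^2$ is zero; for the remaining indices set the unit vectors $\eta_i=R_ix/\|R_ix\|$.

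First I would apply \eqref{jen3} to the two kinds of terms. Because $\|x\|\le 1$, it gives $\langle x,f(B)x\rangle\ge f(\langle x,Bx\rangle)$. Because $\|\eta_i\|=1$, it gives $\langle\eta_i,f(T_i)\eta_i\rangle\ge f(\langle\eta_i,T_i\eta_i\rangle)$; multiplying by the scalar $\|R_ix\|^2$ and using $\langle R_ix,f(T_i)R_ix\rangle=\|R_ix\|^2\langle\eta_i,f(T_i)\eta_i\rangle$ turns this into $\langle x,R_i^*f(T_i)R_ix\rangle\ge\|R_ix\|^2 f(\langle\eta_i,T_i\eta_i\rangle)$. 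Summing these estimates bounds the left-hand side below by $f(\langle x,Bx\rangle)+\sum_{i}\|R_ix\|^2 f(\langle\eta_i,T_i\eta_i\rangle)$.

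The last step recognises this lower bound as a weighted Jensen inequality. The weights $w_0=1$ and $w_i=\|R_ix\|^2$ sum to $k_n^{-1}$, so $\tilde w_i:=k_nw_i$ are nonnegative scalars with $\sum_{i=0}^n\tilde w_i=1$. With $a_0=\langle x,Bx\rangle$ and $a_i=\langle\eta_i,T_i\eta_i\rangle=\|R_ix\|^{-2}\langle x,R_i^*T_iR_ix\rangle$, a direct computation gives $\sum_{i=0}^n\tilde w_ia_i=k_n\langle x,(B+\sum_i R_i^*T_iR_i)x\rangle=k_n\langle x,Sx\rangle$. Operator convexity of $f$ applied to this finite convex combination — this is \eqref{jen2} with $E_i=\sqrt{\tilde w_i}\,e$ read inside a faithful representation of $\mathscr A$, where $\sum_{i=0}^n E_i^*E_i=e$ — then yields $f(k_n\langle x,Sx\rangle)\le\sum_{i=0}^n\tilde w_i f(a_i)=k_n\big(f(\langle x,Bx\rangle)+\sum_i\|R_ix\|^2 f(\langle\eta_i,T_i\eta_i\rangle)\big)$. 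Multiplying by $k_n^{-1}$ and combining with the bound of the previous paragraph gives the asserted inequality.

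The main obstacle is not the algebra but the legitimacy of the functional calculus at each stage: I must check that $\langle x,Bx\rangle$, each $\langle\eta_i,T_i\eta_i\rangle$, and especially $k_n\langle x,Sx\rangle$ have spectra inside $J$, even though $S$ itself is not assumed to have spectrum in $J$. This is exactly where the hypothesis that $J$ is an interval containing $0$ is used: the validity of \eqref{jen3} already forces $\langle x,Bx\rangle$ and $\langle\eta_i,T_i\eta_i\rangle$ to have spectra in $J$, and since $k_n\langle x,Sx\rangle=\sum_{i=0}^n\tilde w_ia_i$ is a genuine convex combination of such elements, its spectrum again lies in the interval $J$, so that $f(k_n\langle x,Sx\rangle)$ is well defined.
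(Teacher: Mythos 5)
Your proof is correct and follows essentially the same route as the paper's: apply the module Jensen inequality \eqref{jen3} to $B=S-\sum_{i=1}^n R_i^*T_iR_i$ and to the unit vectors $R_ix/\|R_ix\|$, then absorb the resulting scalar-weighted sum into a single convex combination with weights $1,\|R_ix\|^2$ normalized by $k_n$. The only differences are refinements in which you are more careful than the paper: you discard the indices with $R_ix=0$ before dividing by $\|R_ix\|$, you justify the combining step via \eqref{jen2} with $E_i=\sqrt{\tilde w_i}\,e$ (correctly recognizing that the elements $\langle x,Bx\rangle,\ \langle \eta_i,T_i\eta_i\rangle\in\mathscr A$ need not commute, so operator convexity rather than the ``classical'' weighted Jensen inequality the paper cites is what is really used), and you verify that all the functional-calculus arguments, including $k_n\langle x,Sx\rangle$, have spectra in $J$.
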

\begin{proof}
\begin{align*}
&\left\langle x, \left[f\left(S-\sum_{i=1}^n R_i^*T_iR_i\right)+
\sum_{i=1}^n R_i^*f(T_i)R_i\right] x \right\rangle\\
&=\left\langle x, f\left(S-\sum_{i=1}^n
R_i^*T_iR_i\right)x\right\rangle+\left\langle x,
\sum_{i=1}^n R_i^*f(T_i)R_i x \right\rangle\\
&= \left\langle x, f\left(S-\sum_{i=1}^n R_i^*T_iR_i\right)x
\right\rangle+ \sum_{i=1}^n \|R_ix\|^2\left\langle
\frac{R_ix}{\|R_ix\|}, f(T_i)\frac{R_ix}{\|R_ix\|}\right\rangle\\
&\geq f\left(\left\langle x, \left(S-\sum_{i=1}^n
R_i^*T_iR_i\right)x \right\rangle\right)+ \sum_{i=1}^n
\|R_ix\|^2f\left(\left\langle \frac{R_ix}{\|R_ix\|},
T_i\left(\frac{R_ix}{\|R_ix\|}\right)\right\rangle\right)\quad ({\rm
by~} \eqref{jen3})\\
&= 1 \cdot f\left(\left\langle x, \left(S-\sum_{i=1}^n
R_i^*T_iR_i\right)x \right\rangle\right)+ \sum_{i=1}^n
\|R_ix\|^2f\left(\left\langle x,
\frac{R_i^*T_iR_ix}{\|R_ix\|^2}\right\rangle\right)\\
&\geq k_n^{-1}f\left(k_n \left\langle x, \left(S-\sum_{i=1}^n
R_i^*T_iR_i\right)x \right\rangle + k_n\sum_{i=1}^n \|R_ix\|^2
\left\langle x,
\frac{R_i^*T_iR_ix}{\|R_ix\|^2}\right\rangle\right)\\
&\qquad\qquad\qquad\qquad\qquad\qquad\qquad\qquad\qquad\qquad({\rm
by~the~classical~weighted~Jensen~inequality})\\
&\geq k_n^{-1}f\left(k_n\left\langle x,\left(S-\sum_{i=1}^n
R_i^*T_iR_i + \sum_{i=1}^n \|R_ix\|^2
\frac{R_i^*T_iR_i}{\|R_ix\|^2}\right)x\right\rangle\right)\\
&= k_n^{-1}f\left(k_n\left\langle x, Sx\right\rangle\right)\,,
\end{align*}
where $k_n=\left(1+\sum_{i=1}^n\|R_ix\|^2\right)^{-1}$.
\end{proof}
As a special case in which ${\mathscr X}$ is ${\mathbb C}$ as a
${\mathbb C}$-module, $f(t)=t^2$, $x=1$, and $S$, $T_i\,\,(1 \leq i
\leq n)$ and $R_i\,\,(1 \leq i \leq n)$ are given real numbers
$\delta$, $\alpha x_i\,\,(1 \leq i \leq n)$ and $\alpha^{-1/2}$,
respectively, we get the classical Hua's inequality.


\section{An operator Hua's inequality}

In this section we present an operator Hua's inequality and show
that it is equivalent to operator convexity.

\begin{theorem}\label{t3}
Let ${\mathscr H}$ be a Hilbert space, let $f$ be an operator convex
function on an interval $J$ and let $C_i\,\,(1 \leq i \leq n)$ be
arbitrary operators and $B, A_i\,\,(1\leq i\leq n)$ be self-adjoint
operators operators such that the spectra of $B-\sum_{i=1}^n
C_i^*A_iC_i$ and $A_i$ are contained in $J$. Then
\begin{eqnarray}\label{5.1}
f\left(B-\sum_{i=1}^n C_i^*A_iC_i\right) + \sum_{i=1}^n
C_i^*f(A_i)C_i \geq D^{-1}f(DBD)D^{-1}\,,
\end{eqnarray}
where $D=\left(I+\sum_{i=1}^nC_i^*C_i\right)^{-1/2}$.
\end{theorem}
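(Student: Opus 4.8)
The plan is to establish the operator inequality \eqref{5.1} by leveraging the Hansen--Pedersen--Jensen inequality \eqref{jen2} together with a clever expansion that decomposes the left-hand side into a single convex-combination term. The core idea is to recognize $B$ as a telescoping sum: we may write
$$B = \left(B-\sum_{i=1}^n C_i^*A_iC_i\right) + \sum_{i=1}^n C_i^*A_iC_i,$$
so that conjugating by $D$ yields $DBD = D\left(B-\sum_{i=1}^n C_i^*A_iC_i\right)D + \sum_{i=1}^n (C_iD)^*A_i(C_iD)$. The crucial observation is that, with $D=\left(I+\sum_{i=1}^nC_i^*C_i\right)^{-1/2}$, the operators $E_0:=D$ and $E_i:=C_iD$ satisfy the normalization $E_0^*E_0 + \sum_{i=1}^n E_i^*E_i = D^2 + D\left(\sum_{i=1}^n C_i^*C_i\right)D = D\left(I+\sum_{i=1}^nC_i^*C_i\right)D = I$. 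This is precisely the hypothesis required to invoke \eqref{jen2}.

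First I would apply Hansen--Pedersen--Jensen's inequality \eqref{jen2} with these $n+1$ operators $E_0, E_1, \dots, E_n$ and the self-adjoint arguments $A_0:=B-\sum_{i=1}^n C_i^*A_iC_i$ and $A_1,\dots,A_n$, whose spectra lie in $J$ by hypothesis. This gives directly
$$f\!\left(E_0^*A_0E_0 + \sum_{i=1}^n E_i^*A_iE_i\right) \leq E_0^*f(A_0)E_0 + \sum_{i=1}^n E_i^*f(A_i)E_i.$$
The argument inside $f$ on the left simplifies, by the telescoping identity above, to $DBD$. Thus the right-hand side dominates $f(DBD)$:
$$f(DBD) \leq D\,f\!\left(B-\sum_{i=1}^n C_i^*A_iC_i\right)D + \sum_{i=1}^n (C_iD)^*f(A_i)(C_iD).$$

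Next I would conjugate both sides of this inequality by $D^{-1}$, which is a legitimate operation preserving the order $\leq$ since conjugation by any (here self-adjoint, invertible) operator is order-preserving on the positive cone. Because $D^{-1}(DXD)D^{-1}=X$ for any $X$, conjugating the right-hand side returns exactly $f\left(B-\sum_{i=1}^n C_i^*A_iC_i\right) + \sum_{i=1}^n C_i^*f(A_i)C_i$, while the left-hand side becomes $D^{-1}f(DBD)D^{-1}$. This yields \eqref{5.1}.

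The main obstacle, and the step demanding the most care, is the bookkeeping around the normalization condition and the telescoping identity: one must verify that $D$ is genuinely invertible (which holds since $I+\sum_{i=1}^nC_i^*C_i \geq I > 0$), confirm that the spectrum of $A_0$ lies in $J$ as assumed, and check that \eqref{jen2} applies in the equality-normalized form $\sum E_i^*E_i=I$ rather than the inequality form. Since $f$ is assumed merely operator convex (without the hypothesis $f(0)\le 0$), it is essential to use the full equality version $\sum_{i=0}^n E_i^*E_i = I$, which is exactly what the choice of $D$ delivers. Everything else is routine algebraic manipulation of the $2\times 2$-matrix-free expansion, so once the normalization is confirmed the proof is immediate.
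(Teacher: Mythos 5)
Your proposal is correct and follows essentially the same route as the paper: the same choice $E_0=D$, $E_i=C_iD$ with $D^2+\sum_{i=1}^n DC_i^*C_iD=I$, the same application of the equality-normalized Hansen--Pedersen--Jensen inequality \eqref{jen2} to the telescoped argument $DBD$, and the same final conjugation by $D^{-1}$ using that $T\geq S$ implies $R^*TR\geq R^*SR$. Your explicit remark that the equality form $\sum E_i^*E_i=I$ is essential (since $f(0)\leq 0$ is not assumed) is a point the paper leaves implicit, but the argument is the same.
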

\begin{proof} It follows from
$$DD+\sum_{i=1}^nDC_i^*C_iD=D\left(I+\sum_{i=1}^nC_i^*C_i\right)D=I$$
and operator convexity of $f$ that
\begin{eqnarray*}
Df\left(B-\sum_{i=1}^n C_i^*A_iC_i\right)D + \sum_{i=1}^n
DC_i^*f(A_i)C_iD &\geq& f\Big(DBD-\sum_{i=1}^nDC_i^*A_iC_iD\\
&&+\sum_{i=1}^nDC_i^*A_iC_iD\Big)\\
&=&f(DBD)\,.
\end{eqnarray*}
Due to the fact that $T\geq S$ implies $R^*TR \geq R^*SR$, we deduce
\eqref{5.1}.
\end{proof}

\begin{remark}
If we use a concave operator function, then the inequality in
\eqref{5.1} will be reversed.
\end{remark}

Applying Theorem \ref{t3} to the convex functions $f_1(t)=t^{-1}$
and $f_2(t)=t^p\,\,(1\leq p\leq 2 {\rm ~or~} -1 \leq p \leq 0)$, and
concave functions $f_3(t)=t^p\,\,(0\leq p\leq 1)$ and $f_4(t)=\log
t$ on $(0, \infty)$ we get inequalities (i)-(iv) of the following
Corollary, respectively. The inequalities of (i) and (iv) are
operator extensions of main result of \cite{WAN}.
\begin{corollary}
Let ${\mathscr H}$ be a Hilbert space and let $C_i\,\,(1 \leq i \leq
n)$ be arbitrary operators and $B$ and $A_i\,\,(1\leq i\leq n)$ be
self-adjoint operators acting on ${\mathscr H}$ such that
$B-\sum_{i=1}^n C_i^*A_iC_i>0$ and $A_i>0$. Then
\begin{align*}
& (i)~ \left(B-\sum_{i=1}^n C_i^*A_iC_i\right)^{-1} + \sum_{i=1}^n
C_i^*A_i^{-1}C_i
\geq D^{-1}(DBD)^{-1}D^{-1}\,;\\
& (ii)~ \left(B-\sum_{i=1}^n C_i^*A_iC_i\right)^{p} + \sum_{i=1}^n
C_i^*A_i^{p}C_i
\geq D^{-1}(DBD)^{p}D^{-1}\quad (p \in [-1,0]\cup[1,2])\,;\\
& (iii)~ \left(B-\sum_{i=1}^n C_i^*A_iC_i\right)^{p} + \sum_{i=1}^n
C_i^*A_i^{p}C_i
\leq D^{-1}(DBD)^{p}D^{-1}\quad(p\in[0,1])\,;\\
& (iv)~ \log\left(B-\sum_{i=1}^n C_i^*A_iC_i\right) + \sum_{i=1}^n
C_i^*\log (A_i)C_i \leq D^{-1}\log(DBD)D^{-1}\,,
\end{align*}
where $D=(I+\sum_{i=1}^nC_i^*C_i)^{-1/2}$.
\end{corollary}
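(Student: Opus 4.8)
The plan is to obtain all four inequalities by specialising Theorem~\ref{t3} (for the operator convex cases) and the Remark following it (for the operator concave cases) to the interval $J=(0,\infty)$. First I would verify that the standing hypotheses line up with those of Theorem~\ref{t3}. The assumptions $B-\sum_{i=1}^nC_i^*A_iC_i>0$ and $A_i>0$ guarantee that the spectra of the self-adjoint operators $B-\sum_{i=1}^nC_i^*A_iC_i$ and $A_i\,\,(1\leq i\leq n)$ are contained in $(0,\infty)=J$, which is precisely the spectral containment that Theorem~\ref{t3} demands; moreover $D=(I+\sum_{i=1}^nC_i^*C_i)^{-1/2}$ is a well-defined positive invertible operator and $DBD$ is positive, so $f(DBD)$ makes sense for each $f$ considered. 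Note that Theorem~\ref{t3} requires neither $0\in J$ nor $f(0)\leq 0$ (its proof uses only the equality $DD+\sum_{i=1}^nDC_i^*C_iD=I$ together with operator convexity), so working on $(0,\infty)$, where $t^{-1}$, $t^p$ and $\log t$ are all defined, is legitimate.

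Next I would record the classical operator convexity and concavity facts from L\"owner theory: the map $t\mapsto t^{-1}$ is operator convex on $(0,\infty)$; more generally $t\mapsto t^p$ is operator convex on $(0,\infty)$ precisely for $p\in[-1,0]\cup[1,2]$ and operator concave precisely for $p\in[0,1]$; and $t\mapsto\log t$ is operator concave on $(0,\infty)$. These are standard and may be quoted from the operator-inequality references \cite{FUR, P-F-H-S}. Thus $f_1(t)=t^{-1}$ and $f_2(t)=t^p\,\,(p\in[-1,0]\cup[1,2])$ are operator convex, while $f_3(t)=t^p\,\,(p\in[0,1])$ and $f_4(t)=\log t$ are operator concave, all on $(0,\infty)$.

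With these in hand, parts (i) and (ii) follow by applying Theorem~\ref{t3} to $f_1$ and $f_2$ respectively and reading off, by the functional calculus, $f(B-\sum_{i=1}^nC_i^*A_iC_i)$, $f(A_i)$ and $f(DBD)$ as the corresponding powers; this converts \eqref{5.1} verbatim into the $\geq$ inequalities of (i) and (ii). For parts (iii) and (iv) I would instead invoke the Remark: since $f_3$ and $f_4$ are operator concave, the inequality in \eqref{5.1} reverses, and performing the same substitution of $t^p\,\,(p\in[0,1])$ and $\log t$ yields the $\leq$ inequalities of (iii) and (iv). In every case the quantity $D=(I+\sum_{i=1}^nC_i^*C_i)^{-1/2}$ is exactly the one appearing in the statement.

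The main obstacle is not conceptual but one of careful bookkeeping: one must cite the operator convexity/concavity of $t\mapsto t^p$ with the sharp exponent thresholds $p\in\{-1,0,1,2\}$ correct (these are the genuine boundaries of operator convexity in L\"owner's theory, and a wrong range would invalidate the application), and one must confirm that the reversal in the Remark is applied to exactly $f_3$ and $f_4$ and not to $f_1,f_2$. Once the convex/concave classification is fixed, the remainder is a direct substitution into \eqref{5.1} with no further estimation required.
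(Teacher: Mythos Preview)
Your proposal is correct and follows exactly the paper's own approach: the paper simply states that (i)--(iv) are obtained by applying Theorem~\ref{t3} to the operator convex functions $f_1(t)=t^{-1}$ and $f_2(t)=t^p\ (p\in[-1,0]\cup[1,2])$, and (via the Remark) to the operator concave functions $f_3(t)=t^p\ (p\in[0,1])$ and $f_4(t)=\log t$ on $(0,\infty)$. Your additional checks---that $B>0$ forces $DBD>0$ so the functional calculus applies, and that Theorem~\ref{t3} does not require $0\in J$---are useful clarifications the paper leaves implicit.
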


If ${\mathscr H}$ is of dimension one, $\delta, x_1, \cdots, x_n \in
{\mathbb R}$, $\alpha >0$ and we take $C_i=\alpha^{-1/2}\,\,(1 \leq
i \leq n)$, $B=\delta$ and $A_i=\alpha x_i\,\,(1 \leq i\leq n)$ in
Theorem \ref{t3}, then
$D=\left(\frac{\alpha}{n+\alpha}\right)^{1/2}$ we obtain the main
theorem of \cite{P-P} as follows.

\begin{corollary} \cite[Theorem 1]{P-P}
Let $f: J \to {\mathbb R}$ be a convex function and let $\alpha,
\delta, x_1, \cdots, x_n$ be real numbers such $\alpha>0$ and
$\delta-\sum_{i=1}^n x_i, \alpha x_1\cdots, \alpha x_n \in J$. Then
\begin{eqnarray}\label{pec}
f\left(\delta-\sum_{i=1}^nx_i\right) +
\sum_{i=1}^n\alpha^{-1}f\left(\alpha x_i\right)\geq
\frac{\alpha+n}{\alpha}f\left(\frac{\alpha
\delta}{\alpha+n}\right)\,.
\end{eqnarray}
\end{corollary}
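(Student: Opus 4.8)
The plan is to obtain \eqref{pec} as the scalar, one-dimensional specialization of the operator Hua inequality in Theorem \ref{t3}, taking $\mathscr{H}=\mathbb{R}$, $C_i=\alpha^{-1/2}$, $B=\delta$ and $A_i=\alpha x_i$ for $1\le i\le n$. First I would record that on a one-dimensional Hilbert space every operator is a real scalar, so $C_i^*C_i=\alpha^{-1}$, whence $\sum_{i=1}^n C_i^*C_i=n/\alpha$ and $D=\bigl(I+\sum_{i=1}^n C_i^*C_i\bigr)^{-1/2}=(\alpha/(\alpha+n))^{1/2}$, in agreement with the value quoted in the text. With these choices one has $B-\sum_{i=1}^n C_i^*A_iC_i=\delta-\sum_{i=1}^n x_i$ and $A_i=\alpha x_i$, both of which lie in $J$ by hypothesis, so the spectral condition of Theorem \ref{t3} is met.

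The delicate point, and the step I expect to be the main obstacle, is that Theorem \ref{t3} is stated for an \emph{operator} convex $f$, whereas here $f$ is only assumed convex on $J$. To close this gap I would not invoke Theorem \ref{t3} as a black box but rather revisit its proof in the scalar setting. The only place operator convexity enters that proof is the single Hansen--Pedersen--Jensen step
\[
Df\!\left(B-\sum_{i=1}^n C_i^*A_iC_i\right)\!D+\sum_{i=1}^n DC_i^*f(A_i)C_iD\ \ge\ f(DBD),
\]
which rests on the identity $DD+\sum_{i=1}^n DC_i^*C_iD=I$. When $\mathscr{H}$ is one-dimensional all the entries are scalars and this inequality collapses to the ordinary finite weighted Jensen inequality $\sum_j w_j f(t_j)\ge f(\sum_j w_j t_j)$, with nonnegative weights $w_0=D^2=\alpha/(\alpha+n)$ and $w_i=(C_iD)^2=1/(\alpha+n)$ summing to $1$. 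That inequality holds for every convex $f$, so the conclusion \eqref{5.1} of Theorem \ref{t3} remains valid for an ordinary convex $f$ in dimension one.

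It then remains to check that the two sides of \eqref{5.1} become the two sides of \eqref{pec}. Substituting the chosen values gives $\sum_{i=1}^n C_i^*f(A_i)C_i=\alpha^{-1}\sum_{i=1}^n f(\alpha x_i)$ on the left, while $DBD=\alpha\delta/(\alpha+n)$ and hence $D^{-1}f(DBD)D^{-1}=\frac{\alpha+n}{\alpha}\,f\!\bigl(\alpha\delta/(\alpha+n)\bigr)$ on the right. Assembling these identities turns \eqref{5.1} verbatim into \eqref{pec}, completing the argument. Beyond this bookkeeping the only genuine content is the observation of the previous paragraph, namely that in dimension one the operator-convexity hypothesis of Theorem \ref{t3} may be relaxed to ordinary convexity, since the sole appeal to it reduces to the classical weighted Jensen inequality.
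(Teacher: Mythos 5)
Your proposal is correct and takes essentially the same route as the paper, which obtains \eqref{pec} by specializing Theorem \ref{t3} to a one-dimensional $\mathscr{H}$ with $C_i=\alpha^{-1/2}$, $B=\delta$, $A_i=\alpha x_i$, so that $D=(\alpha/(\alpha+n))^{1/2}$. Your extra observation—that the operator-convexity hypothesis can be relaxed to ordinary convexity because the sole Hansen--Pedersen--Jensen step collapses in dimension one to the classical weighted Jensen inequality—correctly fills in a point the paper passes over in silence, since the corollary assumes only convexity of $f$.
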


Now, we shall show that our operator Hua's inequality \eqref{5.1}
implies the Hansen--Pedersen--Jensen operator inequality in the case
where the value of the real function $f$ at $0$ is non-positive.

\begin{theorem}\label{t4}
Let $J$ be an interval containing $0$ and let $f$ be a continuous
real valued function defined on $J$ with $f(0)\leq 0$. Let operator
Hua's inequality \eqref{5.1} hold, where $C_i\,\,(1 \leq i \leq n)$
are arbitrary operators and $B, A_i\,\,(1\leq i\leq n)$ are
self-adjoint operators such that the spectra of $B-\sum_{i=1}^n
C_i^*A_iC_i$ and $A_i$ are contained in $J$. Then $f$ is operator
convex.
\end{theorem}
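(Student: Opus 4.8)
The plan is to deduce the Hansen--Pedersen--Jensen inequality \eqref{jen2} (in its sub-unital form $\sum_{i=1}^nE_i^*E_i\le I$) from the assumed validity of operator Hua's inequality \eqref{5.1}, and then to invoke the stated equivalence of \eqref{jen2} with operator convexity (in the presence of $f(0)\le0$) to finish. So the whole argument reduces to feeding one well-chosen family of data into \eqref{5.1} and cleaning up.

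First I would fix arbitrary operators $E_i$ with $\sum_{i=1}^nE_i^*E_i\le I$, self-adjoint $A_i$ whose spectra lie in $J$, and a scalar $r\in(0,1)$. I would then set $D_r=\left(I-r^2\sum_{i=1}^nE_i^*E_i\right)^{1/2}$, which is positive and invertible since $r^2\sum_{i=1}^nE_i^*E_i\le r^2I<I$, and put $C_i=rE_iD_r^{-1}$ together with $B=\sum_{i=1}^nC_i^*A_iC_i$. Because $D_r$ commutes with $\sum_{i=1}^nE_i^*E_i$, a direct computation gives $I+\sum_{i=1}^nC_i^*C_i=D_r^{-2}$, so the operator $D$ occurring in \eqref{5.1} is exactly $D_r$; moreover $C_iD_r=rE_i$, whence $DBD=\sum_{i=1}^n(C_iD_r)^*A_i(C_iD_r)=r^2\sum_{i=1}^nE_i^*A_iE_i$, while $B-\sum_{i=1}^nC_i^*A_iC_i=0$ has spectrum $\{0\}\subseteq J$. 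Thus the spectral hypotheses of \eqref{5.1} are met, and \eqref{5.1} reads $f(0)+\sum_{i=1}^nC_i^*f(A_i)C_i\ge D_r^{-1}f\!\left(r^2\sum_{i=1}^nE_i^*A_iE_i\right)D_r^{-1}$.

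Next I would exploit $f(0)\le0$ to discard the term $f(0)$, obtaining $\sum_{i=1}^nC_i^*f(A_i)C_i\ge D_r^{-1}f\!\left(r^2\sum_{i=1}^nE_i^*A_iE_i\right)D_r^{-1}$, and then compress by $D_r$ on both sides; since $(C_iD_r)=rE_i$, the left side becomes $r^2\sum_{i=1}^nE_i^*f(A_i)E_i$, giving $f\!\left(r^2\sum_{i=1}^nE_i^*A_iE_i\right)\le r^2\sum_{i=1}^nE_i^*f(A_i)E_i$. Finally I would let $r\to1^-$. Because $0\in J$, the sandwich $\alpha\sum_{i=1}^nE_i^*E_i\le\sum_{i=1}^nE_i^*A_iE_i\le\beta\sum_{i=1}^nE_i^*E_i$ (with $[\alpha,\beta]\subseteq J$ a compact interval containing $0$ and all the spectra $\sigma(A_i)$) keeps both $r^2\sum_{i=1}^nE_i^*A_iE_i$ and its limit inside $[\alpha,\beta]$, so norm-continuity of the functional calculus $S\mapsto f(S)$ on that compact set lets the inequality pass to the limit, yielding \eqref{jen2}. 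As this holds for all admissible choices, the Hansen--Pedersen--Jensen characterization gives that $f$ is operator convex.

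The algebra here is routine; the delicate part is the limit and the accompanying spectral bookkeeping. The reparametrization $C_i=rE_iD_r^{-1}$ requires $D_r$ to be invertible, which forces $r<1$ and so produces only strictly sub-unital families $\sum_{i=1}^n(C_iD_r)^*(C_iD_r)=r^2\sum_{i=1}^nE_i^*E_i<I$; the prescribed range $\sum_{i=1}^nE_i^*E_i\le I$, including the boundary, is therefore attained only in the limit $r\to1$. I expect the main technical obstacle to be justifying that limit uniformly, namely verifying that every operator entering the estimate retains its spectrum in one fixed compact subinterval of $J$ so that $S\mapsto f(S)$ is norm-continuous along the whole family. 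It is precisely here that the hypotheses $0\in J$ and $f(0)\le0$ are indispensable: they make the choice $B=\sum_{i=1}^nC_i^*A_iC_i$ collapse the first term to the harmless $f(0)$ and ensure that the scaling by $r^2$ does not drive any spectrum out of $J$.
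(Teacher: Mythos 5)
Your proof is correct, and its skeleton coincides with the paper's: both substitute $B=\sum_{i=1}^n C_i^*A_iC_i$ into \eqref{5.1} (so that the first term collapses to $f(0)\le 0$ and can be discarded), compress by $D$, and reparametrize the $C_i$ so that $C_iD$ becomes the prescribed family, reducing everything to the Hansen--Pedersen--Jensen characterization \eqref{jen2}. Where you genuinely diverge is in handling the boundary of the constraint $\sum_i E_i^*E_i\le I$: the paper's choice $C_i=E_i\left(I-\sum_j E_j^*E_j\right)^{-1/2}$ requires strict sub-unitality $\sum_i E_i^*E_i<I$ for invertibility, and the paper then upgrades to $\le I$ by importing the $2\times 2$ dilation trick of \cite[Theorem 1.10]{P-F-H-S} with the matrix $P=\mathrm{diag}(1-\varepsilon,\varepsilon)$ and letting $\varepsilon\to 0$. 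You instead build the approximation into the data itself via the scalar $r\in(0,1)$, taking $C_i=rE_iD_r^{-1}$ with $D_r=\left(I-r^2\sum_i E_i^*E_i\right)^{1/2}$, which is invertible even when $\sum_i E_i^*E_i=I$, and pass to the limit $r\to 1^-$. Your spectral bookkeeping for that limit is exactly what is needed and is done correctly: since $0\in J$, the convex hull $[\alpha,\beta]\subseteq J$ of $\{0\}\cup\bigcup_i\sigma(A_i)$ satisfies $\alpha I\le r^2\sum_i E_i^*A_iE_i\le\beta I$ for all $r\in(0,1]$ (using $\alpha\le 0\le\beta$ together with $0\le\sum_i E_i^*E_i\le I$), so $S\mapsto f(S)$ is norm-continuous along the whole family by uniform polynomial approximation of $f$ on $[\alpha,\beta]$, and the inequality survives the limit because the positive cone is norm-closed. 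The trade-off: your route is more self-contained and covers the full range $\sum_i E_i^*E_i\le I$, including the unital case, in a single stroke, whereas the paper first proves only the strictly sub-unital case and then defers the analytic extension work to a quotable external technique.
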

\begin{proof}
Let $X_i\,\,(1 \leq i \leq n)$ be self-adjoint operators on a
Hilbert space ${\mathscr H}$ with spectra contained in $J$ and $E_i$
be arbitrary operators such that $\sum_{i=1}^nE_i^*E_i < I$, where
$<$ denotes the strict positivity. Recall that by a strict positive
operator we mean an invertible positive one.

\noindent Replacing $B$ by $\sum_{i=1}^nC_i^*A_iC_i$ in \eqref{5.1}
we get
$$\sum_{i=1}^n C_i^*f(A_i)C_i \geq
f(0)+\sum_{i=1}^n C_i^*f(A_i)C_i \geq
D^{-1}f\left(D\sum_{i=1}^nC_i^*A_iC_iD\right)D^{-1}\,,$$ that is
\begin{eqnarray}\label{5.3}
\sum_{i=1}^n (C_iD)^*f(A_i)C_iD \geq
f\left(\sum_{i=1}^n(C_iD)^*A_i(C_iD)\right)\,. \end{eqnarray} Set
$C_i=E_i\left(I-\sum_{i=1}^nE_i^*E_i\right)^{-1/2}$. Then
$$\sum_{i=1}^n C_i^*C_i+I = \sum_{i=1}^n E_i^*E_i \left(I-\sum_{i=1}^n
E_i^*E_i\right)^{-1}+I=\left(I-\sum_{i=1}^n
E_i^*E_i\right)^{-1}\,,$$ whence
$$D=\left(\sum_{i=1}^n C_i^*C_i+I\right)^{-1/2}= \left(I-\sum_{i=1}^n
E_i^*E_i\right)^{1/2}$$ so $C_iD=E_i$. It follows from \eqref{5.3}
that
$$\sum_{i=1}^n E_i^*f(X_i)E_i \geq
f\left(\sum_{i=1}^nE_i^*X_iE_i\right)\,.$$ By using the same method
of the proof of (iv) $\Rightarrow$ (i) in \cite[Theorem
1.10]{P-F-H-S} with the matrix $P=\left[
\begin{array}{cc}1&0\\0&0
\end{array}\right]$ replaced by the diagonal matrix
$P=\left[
\begin{array}{cc}1-\varepsilon&0\\ 0&\varepsilon
\end{array}\right]$ with $0<\varepsilon <1$ and then applying the continuity of $f$ as $\varepsilon \to
0$, we deduce that the latter inequality holds for all arbitrary
operators $E_i$ with $\sum_{i=1}^nE_i^*E_i \leq I$ and the function
$f$ is therefore operator convex.
\end{proof}

Thus we conclude that operator Hua's inequality \eqref{5.1} is
equivalent to the Hansen--Pedersen--Jensen operator inequality. One
can similarly deduce that a continuous real function satisfying
\eqref{pec} is convex.\\


\textbf{Acknowledgement.} The author would like to sincerely thank
Professor Tsuyoshi Ando for their very useful comments and to
express his gratitude to Professor Frank Hansen and Professor Yuki
Seo for their helpful suggestions on the last paragraph of the proof
of Theorem \ref{t4}.


\end{document}